\documentclass{amsart}
\usepackage{amssymb}
\usepackage{amsmath}
\usepackage{amsfonts}
\usepackage{mathrsfs}
\newtheorem{thm}{Theorem}[section]
\newtheorem{cor}[thm]{Corollary}
\newtheorem{lem}[thm]{Lemma}
\newtheorem{prop}[thm]{Proposition}
\theoremstyle{definition}

\theoremstyle{remark}

\numberwithin{equation}{section}
\usepackage{enumitem}
\usepackage{romannum}

\begin{document}
	
	%
	%
	%
	%
	%
	%
	%
	%
	%

	\title[Uniform Norms and Maximal Substructures]{On Multiplicity of Uniform Norms and Maximal Spectral Substructures in Commutative Banach Algebras}

	\author[J. J. Dabhi]{Jekwin J. Dabhi}
	\address{Institute of Infrastructure Technology Research and Management (IITRAM), Ahmedabad - 380026, Gujarat, India}
	\email{jekwin.13@gmail.com, jekwin.dabhi@iitram.ac.in}

	\author[P. A. Dabhi]{Prakash A. Dabhi}
	\address{Institute of Infrastructure Technology Research and Management (IITRAM), Ahmedabad - 380026, Gujarat, India}
	\email{lightatinfinite@gmail.com, prakashdabhi@iitram.ac.in}
	\thanks{The first author is thankful  for  research support from CSIR HRDG, India (file no. 09/1274(13914)/2022-EMR-I) for the Senior Research Fellowship. The second author is grateful to the National Board for Higher Mathematics (NBHM), India, for the research grant (02011/39/2025/NBHM(R. P.)/R \&D II/16090)}
	\subjclass{Primary 46J05, 46J40}
	
	\keywords{Commutative Banach algebra, Uniform norms, Spectral extension property, weak regularity.}
	
	\date{}
	
	\begin{abstract}
		Let $\mathcal A$ be a semisimple commutative Banach algebra. It is shown that either $\mathcal A$ has exactly one uniform norm or it admits uncountably many uniform norms. Further, it is shown that there always exists a largest closed subalgebra of $\mathcal A$ which is weakly regular, and that there always exist largest closed ideals in $\mathcal A$ having unique uniform norm property (UUNP) and spectral extension property (SEP) respectively.
	\end{abstract}

	\maketitle
	
	\section{Introduction}
	The algebraic and topological structures of commutative Banach algebras have been profoundly linked since the inception of Gel'fand theory. A cornerstone result in this domain is B. E. Johnson's Uniqueness of Norm Theorem \cite{J}, which establishes that any semisimple commutative Banach algebra admits a unique complete algebra norm topology. Given the uniqueness of complete norms, significant interest naturally shifted toward the study of incomplete submultiplicative norms \cite{M2, Y, G}. This investigation led to the introduction of the spectral extension property (SEP) by Meyer \cite{M1}, which characterizes when the spectral radius remains invariant under algebra extensions.
	
	The question of when a topological algebra admits a unique uniform norm was first investigated by Bhatt and Karia \cite{H4}. Motivated by this foundational work and the behavior of the SEP, Bhatt and Dedania \cite{H} formally introduced the unique uniform norm property (UUNP) for Banach algebras. A commutative Banach algebra $\mathcal A$ has the UUNP if it admits exactly one uniform norm, which must necessarily be the spectral radius. The study of the UUNP revealed a natural hierarchy of spectral properties: regularity implies weak regularity, which in turn implies the SEP, which finally implies the UUNP \cite{H, H3, M1}. These properties were subsequently studied on various specific algebras by authors in \cite{P3, P2}.
	
	If a semisimple commutative Banach algebra fails to possess the UUNP, it admits multiple uniform norms. A compelling question arises: how many distinct uniform norms can such an algebra admit? Bhatt and Dedania \cite{H2} first proved that if a Beurling algebra $L^1(G, \omega)$ admits at least two uniform norms, it must admit infinitely many. Later, the authors in \cite{P} generalized this phenomenon, proving that any commutative Banach algebra with more than one uniform norm must possess at least a countably infinite number of them. However, whether the cardinality of the set of uniform norms can be uncountable has remained an open question. In this paper, we answer this question affirmatively, proving that a semisimple commutative Banach algebra either possesses exactly one uniform norm or admits uncountably many.
	
	Parallel to the multiplicity of uniform norms, we investigate the existence of maximal algebraic substructures possessing these hierarchical spectral properties. In 1992, Inoue and Takahasi \cite{T} proved that if a Banach algebra is not regular, it always contains a largest closed regular subalgebra. Motivated by this phenomenon of maximal structures, we investigate whether analogous largest subalgebras and ideals exist for weaker properties. We establish that if a Banach algebra lacks weak regularity, UUNP, or SEP, it nevertheless always contains a largest closed subalgebra (or ideal) possessing these respective properties.
	
	Throughout the paper, $\mathcal A$ is assumed to be a commutative Banach algebra. A nonzero multiplicative linear functional on $\mathcal A$ is a \emph{complex homomorphism} on $\mathcal{A}$. Let $\Phi_{\mathcal A}$ be the collection of all complex homomorphisms on $\mathcal A$. The set $\Phi_{\mathcal A}$ is equipped with the weakest topology such that for each $x \in \mathcal A$, the mapping $\widehat x : \Phi_\mathcal A \rightarrow \mathbb C$ defined by $\widehat{x}(\varphi) = \varphi(x)$ is continuous. The set $\Phi_{\mathcal A}$ with this \emph{Gel'fand topology} is the \emph{Gel'fand space} of $\mathcal A$, which is a locally compact Hausdorff space. If $\mathcal A$ is unital, then $\Phi_{\mathcal A}$ is compact. A commutative Banach algebra $\mathcal A$ is \emph{semisimple} if $\cap_{\varphi \in \Phi_{\mathcal A}}\ker \varphi=\{0\}$, where $\ker \varphi=\{x \in \mathcal A:\varphi(x)=0\}$ is the kernel of $\varphi$. Let $I$ be a closed ideal in $\mathcal{A}$. Then the Gel'fand space of $I$ is identified with $\Phi_\mathcal{A} \setminus h(I)$, where $h(I) = \{\varphi \in \Phi_\mathcal{A} : I \subset \ker(\varphi) \}$ is the \emph{hull} of $I$.
	
	Let $X$ be a nonempty set. A subset $R \subset X$ is a \emph{boundary} for a collection $\mathscr{F}$ of bounded functions on $X$ if for every $f \in \mathscr{F}$ there exists a $y_f \in R$ such that $\|f\|_{\infty} = |f(y_f)|$ \cite[Def. 3.3.1]{K}. Let $X$ be a locally compact Hausdorff space and $\mathcal B$ a subalgebra of $C_0(X)$ that strongly separates the points of $X$. The intersection of all closed boundaries for $\mathcal{B}$, which is a boundary by \cite[Theorem 3.3.2]{K}, is the \emph{Shilov boundary} of $\mathcal {B}$, and is denoted by $\partial(\mathcal B)$. Let $\Gamma : \mathcal A \rightarrow C_0(\Phi_\mathcal A)$ defined by $\Gamma(x) = \widehat{x}$ be the Gel'fand representation of $\mathcal A$. A subset $R$ of $\Phi_\mathcal A$ is a \emph{boundary} for $\mathcal A$ if $R$ is a boundary for $\Gamma(\mathcal A)$. In particular, the \emph{Shilov boundary} of $\mathcal A$, denoted by $\partial(\mathcal A)$, is the intersection of all closed boundaries of $\Gamma(\mathcal{A})$.
	
	A submultiplicative norm (not necessarily complete) $|\cdot|$ on $\mathcal A$ is a \emph{uniform norm} if it satisfies the \emph{square property} $|x^2| = |x|^2$ for all $x \in \mathcal A$. If a commutative Banach algebra $\mathcal A$ is semisimple, it always admits a uniform norm, namely, the spectral radius $r(x)=\lim_{n\to \infty}\|x^n\|^{\frac{1}{n}}$ for all $x \in \mathcal A$. Note that $r(\cdot)$ can also be obtained by $r(x)=\sup\{|\widehat x(\varphi)|:\varphi \in \Phi_{\mathcal A}\}$ for all $x \in \mathcal A$. Conversely, if the spectral radius $r(\cdot)$ is a norm, then $\mathcal A$ is semisimple. For any uniform norm $|\cdot|$ on a semisimple commutative Banach algebra $\mathcal A$, we have $|\cdot| \leq r(\cdot)$, i.e., the spectral radius is the largest uniform norm on $\mathcal A$.
	
	For a uniform norm $|\cdot|$ on $\mathcal A$, the set $F = \{\varphi \in \Phi_\mathcal{A} : |\varphi(x)| \leq |x| \;(x \in \mathcal{A})\}$ is a closed subset of $\Phi_\mathcal A$. Let $|x|_F=\sup\{|\varphi(x)|:\varphi \in F\}$ for all $x \in \mathcal A$. Then $|\cdot|_F$ is a uniform norm on $\mathcal A$ and $|\cdot| =|\cdot|_F$ \cite[Lemma 4.6.2]{K}. In this case, $F$ is said to be a \emph{generating set} for the uniform norm $|\cdot|$. A closed subset $F$ of $\Phi_{\mathcal A}$ is a \emph{set of uniqueness} if $|\cdot|_F$ is a uniform norm on $\mathcal A$.
	
	Finally, we recall the definitions of the relevant spectral properties. A commutative semisimple Banach algebra $\mathcal{A}$ is \emph{regular} if it separates points from closed subsets of its Gel'fand space; specifically, for any closed $E \subset \Phi_{\mathcal{A}}$ and $\varphi \in \Phi_{\mathcal{A}} \setminus E$, there exists a nonzero $x \in \mathcal{A}$ such that $\widehat{x}(E) = \{0\}$ and $\widehat{x}(\varphi) \neq 0$ \cite[Def. 4.2.1]{K}. Weakenings of this property include \emph{weak regularity}, where for any nonempty proper closed subset $E \subset \Phi_{\mathcal{A}}$, there exists a nonzero $x \in \mathcal{A}$ such that $\widehat{x}(E) = \{0\}$.
	
	\section{multiplicity of uniform norms}
	\begin{prop}
		Let $\mathcal A$ be a commutative Banach algebra. Then either $\mathcal A$ has exactly one uniform norm or it has uncountably many uniform norms.
	\end{prop}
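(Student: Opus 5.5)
The plan is to reduce the statement to a counting problem about closed sets of uniqueness, and then produce a family of such sets indexed by the subsets of $\mathbb N$ that give pairwise distinct norms.

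Assume $\mathcal A$ has a uniform norm $|\cdot|$ different from the spectral radius $r(\cdot)$; otherwise there is nothing to prove. By \cite[Lemma 4.6.2]{K}, $|\cdot|=|\cdot|_F$ for its generating set $F$, which is a closed set of uniqueness. Since $|\cdot|_F\neq r(\cdot)$ and $\partial(\mathcal A)$ is a boundary, $F$ cannot contain $\partial(\mathcal A)$. Hence $W=\partial(\mathcal A)\setminus F$ is a nonempty relatively open subset of $\partial(\mathcal A)$.

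The key observation is monotonicity. Every closed set $E$ with $F\subset E\subset \Phi_{\mathcal A}$ is again a set of uniqueness, because $|x|_E\ge |x|_F$ and so $|\cdot|_E$ is a norm; it is clearly submultiplicative and satisfies the square property. It therefore suffices to produce uncountably many such $E$ whose norms $|\cdot|_E$ are pairwise distinct.

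\emph{Case 1: $W$ is infinite.} Using that $\Phi_{\mathcal A}$ is Hausdorff, I will choose distinct points $\psi_n\in W$ with open neighbourhoods $U_n$ such that the closures $\overline{U_n}$ are pairwise disjoint and miss $F$. I will also shrink the $U_n$ so that the family is discrete away from $F$, meaning that any accumulation point of $\bigcup_n U_n$ lies in $F$; in the locally compact space $\Phi_{\mathcal A}$ this can be arranged by passing to a subsequence. For each $S\subset\mathbb N$ set
\[
E_S=F\cup\overline{\textstyle\bigcup_{n\in S}\big(\overline{U_n}\cap\partial(\mathcal A)\big)} .
\]
Now let $S\neq S'$ and pick $n\in S\setminus S'$. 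By the defining property of Shilov boundary points, there is $x_n\in\mathcal A$ with
\[
\sup_{U_n\cap\partial(\mathcal A)}|\widehat{x_n}|>\sup_{\Phi_{\mathcal A}\setminus U_n}|\widehat{x_n}| .
\]
The discreteness arrangement gives $E_{S'}\cap U_n=\emptyset$. Hence $|x_n|_{E_S}>|x_n|_{E_{S'}}$, so $S\mapsto|\cdot|_{E_S}$ is injective and yields $2^{\aleph_0}$ distinct uniform norms.

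\emph{Case 2: $W$ is finite.} Then each point of $W$ is isolated in $\partial(\mathcal A)$. I would try to show that such a point cannot have been omitted by a set of uniqueness. Concretely, I would use a function peaking on that isolated Shilov point, via a Shilov idempotent type argument applied to the closed ideal whose hull is $F$, to produce a nonzero $x$ with $|x|_F=0$. This would contradict $|\cdot|_F$ being a norm, so Case 2 cannot occur.

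I expect the main obstacle to be the topological bookkeeping in Case 1. Two things must be secured together: the closures of unions of the $U_n$ must not leak into $U_m$ for $m\notin S$, and the peaking elements $x_n$ must exist relative to the chosen $U_n$. If the discreteness arrangement is hard to obtain in general, my first fallback is to restrict to a countable subfamily indexed along a convergent net, with the limit taken inside $F$. Alternatively, one can index by a chain of subsets of $\mathbb N$ of cardinality $2^{\aleph_0}$ (for instance the sets $\{n: q_n<t\}$ for an enumeration $(q_n)$ of $\mathbb Q$ and $t\in\mathbb R$). For a chain, only nestedness is needed to compare the norms, which reduces the separation requirement to a single peaking element for each pair.
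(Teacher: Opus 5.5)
\textbf{There is a genuine gap in Case 2.} Case 2 is where the real work lies, and it is only gestured at. Knowing that $p\in W$ is isolated in $\partial(\mathcal A)$ does not by itself give a clopen subset of $\Phi_{\mathcal A}$, so Shilov's idempotent theorem cannot be invoked directly. A peaking element $x$ only gives $|\widehat x|\le\delta<1=\widehat x(p)$ off a neighbourhood $U$ of $p$. It says nothing a priori about the values of $\widehat x$ at the non-boundary points of $U\setminus\{p\}$, so no gap in $\sigma(x)$ is visible.

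Nor does ``the closed ideal whose hull is $F$'' help. Every closed ideal whose hull contains $F$ lies in $\{a:\widehat a|_F=0\}$, which is $\{0\}$ precisely because $|\cdot|_F$ is a norm. Producing a nonzero element there is the whole problem.

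What is missing is the lemma that an isolated point of $\partial(\mathcal A)$ is isolated in $\Phi_{\mathcal A}$. One proof, after adjoining a unit, runs as follows. Since $|\varphi(a)|\le r(a)=\sup_{\partial(\mathcal A)}|\widehat a|$, Hahn--Banach and Riesz give a probability measure $\mu_\varphi$ on $\partial(\mathcal A)$ with $\varphi(a)=\int\widehat a\,d\mu_\varphi$. Choose $U$ with $U\cap\partial(\mathcal A)=\{p\}$. Then $\widehat x(\varphi)^n\to\mu_\varphi(\{p\})$, which forces either $\varphi=p$ or $|\widehat x(\varphi)|\le\delta$. Hence $\{p\}=\widehat x^{-1}(\{1\})$ is clopen, and the idempotent $e$ with $\widehat e=\chi_{\{p\}}$ is nonzero with $|e|_F=0$.

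\textbf{Case 1.} Here the ``discreteness'' arrangement is false in general, and also unnecessary. For the disc algebra with the set of uniqueness $F=\{|z|\le 1/2\}$, one has $W=\mathbb T$. This set is compact, so every sequence in $W$ clusters in $W$, never in $F$. You do not need it, though. Each $U_n$ is open and disjoint from $F$ and from every $\overline{U_m}$ with $m\neq n$, so it is disjoint from the closed set $E_{S'}$ whenever $n\notin S'$. With the closures made pairwise disjoint using regularity of $\Phi_{\mathcal A}$, Case 1 then correctly yields $2^{\aleph_0}$ norms, indeed an injection of the power set of $\mathbb N$.

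\textbf{Comparison with the paper.} The paper needs neither the Shilov boundary nor a case split. It takes one $x$ with $\sup_{K_1}|\widehat x|=r<1=\widehat x(\varphi_0)$. It uses the idempotent theorem only to show that $|\widehat x|$ attains every value in $(r,1)$: a missing modulus would split $\Phi_{\mathcal A}$ into two clopen pieces and give a nonzero idempotent vanishing on $K_1$. It then takes the nested sets $K_1\cup\{|\widehat x|\le c\}$, for which $p_c(x)=c$. Since $|\cdot|_F\neq r$, you already have some $y$ with $|y|_F<r(y)$. Running this argument on that $y$, with $F$ in place of $K_1$, handles both of your cases at once.
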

	\begin{proof}
		By \cite[Theorem 2.1(i)]{P}, any two distinct uniform norms on the unitization $\mathcal A_e$ of $\mathcal A$ corresponds to distinct uniform norms when restricted to $\mathcal A$. So, we may assume that $\mathcal A$ is unital.
		
		Assume that $\mathcal A$ admits at least two distinct uniform norms. Then $\mathcal A$ will have infinitely many uniform norms. So, we may choose uniform norms $p_1(\cdot)$ and $p_2(\cdot)$ such that $p_1(\cdot)\neq r(\cdot)$ and $p_2(\cdot)\neq r(\cdot)$. Let $K_1$ and $K_2$ be the corresponding generating closed sets for $p_1(\cdot)$ and $p_2(\cdot)$ respectively, i.e., $K_i=\{\varphi\in \Phi_{\mathcal A}:|\varphi(x)|\leq p_i(x)\;\;(x \in \mathcal A)\}$, $i=1,2$. As $\Phi_\mathcal A$ is compact, the sets $K_1$ and $K_2$ are compact. As $p_1(\cdot) \neq p_2(\cdot)$, there exists an element $y \in \mathcal A$ such that $p_1(y) < p_2(y)$. Consequently, there exists a character $\varphi_0 \in K_2$ such that $\sup_{\varphi \in K_1} |\widehat{y}(\varphi)| < |\widehat{y}(\varphi_0)|$. Take $x = \frac{y}{\widehat{y}(\varphi_0)}$. Then $\widehat{x}(\varphi_0) = 1$ and $p_1(x) = \sup_{\varphi \in K_1} |\widehat{x}(\varphi)| = r$, for some $0<r<1$.
		
		We claim that the spectrum of $x$, defined as $\sigma(x) = \widehat x (\Phi_\mathcal A)$, contains elements of every modulus $c$, where $r<c<1$. Suppose not, i.e., there exists some $r < c_0 < 1$ such that no element in $\sigma(x)$ has modulus $c_0$. Then this partitions compact spectrum $\sigma(x)$ into two disjoint open sets $$X_1 = \sigma(x) \cap \{z \in \mathbb C : |z| < c_0\} \ \text{and} \ X_2 = \sigma(x) \cap \{z \in \mathbb C : |z| > c_0\}.$$
		
		Note that if $\varphi \in K_1$, then $|\varphi(x)|\leq r$, i.e., $\varphi(x)\in X_1$. Also, as $\varphi_0(x)=1>c_0$, $\varphi_0(x)\in X_2$. As $\widehat x : \Phi_\mathcal A \rightarrow \mathbb C$ is continuous, $U_1 = \widehat x^{-1}(X_1)$ and $U_2 = \widehat x^{-1}(X_2)$ are two nonempty disjoint open subsets of $\Phi_{\mathcal A}$ whose union is $\Phi_\mathcal A$. By the Shilov Idempotent Theorem, there exists an idempotent $e \in \mathcal A$ such that $\widehat{e} (U_1) = \{0\}$ and $\widehat{e} (U_2) = \{1\}$. Let $\varphi \in K_1$. Then $|\widehat{x}(\varphi)| \leq r < c_0$ implies that $K_1 \subset U_1$, i.e., $\widehat{e}(K_1) = \{0\}$. Therefore $p_1(e) = \sup_{\varphi \in K_1} |\widehat{e}(\varphi)| = 0$. Now, $p_1(\cdot)$ being a norm forces $e$ to be $0$. But then $\widehat e(U_2)=\{1\}$ is not possible. This proves our claim, i.e.,  $\sigma(x)$ contains an element having modulus $c$ for all $r < c < 1$.
		
		Given $c\in (r,1)$, there exists $\varphi_c \in \Phi_\mathcal A$ such that $|\widehat{x}(\varphi_c)| = c$. Using this continuum of spectral values, we construct uncountably many distinct uniform norms. For  each $c \in [r,1]$, define the closed (and thus compact) subset $S_c \subset \Phi_\mathcal A$ by $$S_c = K_1 \cup \{ \varphi \in \Phi_\mathcal A:|\widehat{x}(\varphi)| \leq c \}$$ and $$p_c(a) =\sup\{|\varphi(a)|:\varphi\in S_c\}=\max \left( \sup_{\varphi \in K_1} |\widehat{a}(\varphi)|, \sup_{|\widehat{a}(\varphi)| \leq c} |\widehat{a}(\varphi)|   \right)\quad(a \in \mathcal A).$$
		Since $S_c$ contains $K_1$ and $K_1$ is a set of uniqueness,  $S_c$ will be a set of uniqueness, i.e., $p_c(\cdot)$ is a uniform norm. We established that $\sup_{\varphi \in K_1} |\widehat{x}(\varphi)| = r \leq c$, and because there exists $\varphi_c \in \Phi_\mathcal A$ with $|\widehat{x}(\varphi_c)| = c$, we obtain  $$p_c(x) = \max(r,c)=c.$$
		
		Here, for each $c,c' \in [r,1]$  the norms $p_c(\cdot)$ and $p_{c'}(\cdot)$  are not  equivalent uniform norms as equivalent uniform norms are equal and as for each $r<c<1$  each uniform norm $p_c(\cdot)$ admits different values for $x$. Since the interval $[r,1]$ is uncountable, it follows that $\mathcal A$ admits uncountably many uniform norms.
	\end{proof}

	\section{Largest subalgebras/ideals having spectral properties}
	Let $\mathcal{A}$ be a semisimple commutative Banach algebra. A norm $|\cdot|$ on $\mathcal A$ is a \emph{semisimple norm} if the completion of $\mathcal A$ in the norm $|\cdot|$ is semisimple \cite{Y}. Let $sN$ be the collection of all semisimple norms on $\mathcal A$. By \cite{M1}, the \emph{semisimple permanent radius} of $\mathcal A$ is defined as $r_s(x) = \inf\{|x| :|\cdot| \in sN\}$ and the  \emph{permanent radius}  is defined as $r_p(x) = \inf \{r_{\mathcal A_{p}}(x) : p \in N \}$, where $N$ is the collection of all submultiplicative norms on $\mathcal A$ and $r_{\mathcal A_{p}}(x)$ is the spectral radius of $x$ in the completion of $\mathcal A$ with respect to $p \in N$ \cite{Y}. A Banach algebra $\mathcal A$ has a $(P)$- property \cite{M1,M2} if for a given nonzero closed  ideal $I$ in $\mathcal{A}$, there  exists a nonzero $x \in I$ such that $r_p(x) > 0$.
	
	We use the following results.
	
	\begin{thm}\cite[Theorem 2.3]{H}\label{SH1}
		Let $\mathcal A$ be a semisimple commutative Banach algebra. Then the following statements are equivalent.
		\begin{enumerate}
			\item $\mathcal A$ has UUNP.
			\item $\partial{\mathcal A}$ is the smallest closed set of uniqueness.
			\item If $F\subset \Phi_{\mathcal A}$ is closed and not containing $\partial {\mathcal A}$, then there exists $a\in \mathcal A$ such that $r(a)>0$ and $\widehat a|F=0$.
			\item If $F\subset \Phi_{\mathcal A}$ is closed and does not contain $\partial{\mathcal A}$, then there exists $a\in \mathcal A$ such that $r_s(a)>0$ and $\widehat a|F=0$.
			\item $r=r_s$ on $\mathcal A$.
		\end{enumerate}
	\end{thm}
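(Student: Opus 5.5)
Since the statement is recalled from \cite{H}, I would reprove it directly from the correspondence, recalled in the introduction, between uniform norms and closed sets of uniqueness: every uniform norm has the form $|\cdot|_F$ for its generating set $F$, and $|\cdot|\le r(\cdot)$. The implications I plan to prove are $(1)\Rightarrow(2)\Rightarrow(3)\Rightarrow(1)$, then $(1)\Leftrightarrow(5)$, and finally $(4)\Leftrightarrow(3)$ once $(5)$ is available.

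\emph{The cycle $(1)\Rightarrow(2)\Rightarrow(3)\Rightarrow(1)$.} For $(1)\Rightarrow(2)$: since $\partial\mathcal A$ is a boundary by \cite[Theorem 3.3.2]{K}, we have $|\cdot|_{\partial\mathcal A}=r$, so $\partial\mathcal A$ is a closed set of uniqueness. If $F$ is any closed set of uniqueness, UUNP gives $|\cdot|_F=r$. Then every $\widehat x$ attains its supremum norm on the compact set $F\cup\{\infty\}$ inside $F$, so $F$ is a closed boundary and $F\supseteq\partial\mathcal A$. For $(2)\Rightarrow(3)$: a closed $F\not\supseteq\partial\mathcal A$ cannot be a set of uniqueness, so $|\cdot|_F$ has nonzero kernel. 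Pick $a\neq0$ with $\widehat a|F=0$; semisimplicity gives $r(a)>0$. For $(3)\Rightarrow(1)$: let $|\cdot|=|\cdot|_F$ be a uniform norm with generating set $F$. If $F\not\supseteq\partial\mathcal A$, then (3) gives $a$ with $r(a)>0$ and $|a|_F=0$, contradicting that $|\cdot|_F$ is a norm. Hence $F\supseteq\partial\mathcal A$, so $r=|\cdot|_{\partial\mathcal A}\le|\cdot|_F\le r$.

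\emph{The equivalence with $(5)$.} The key step is the identity
\[
r_s(x)=\inf\{|x| : |\cdot| \text{ a uniform norm on } \mathcal A\}.
\]
To prove ``$\le$'': the completion of $\mathcal A$ in a uniform norm is a uniform algebra, hence semisimple, so every uniform norm lies in $sN$. To prove ``$\ge$'': for $|\cdot|\in sN$ with semisimple completion $\mathcal B$, the spectral radius $r_{\mathcal B}$ is a norm on $\mathcal B$, hence a uniform norm on $\mathcal A$. Moreover $r_{\mathcal B}(x)\le|x|$. Granting the identity, $(1)\Rightarrow(5)$ is immediate. Conversely, under $(5)$ every uniform norm $p$ satisfies $r=r_s\le p\le r$, which is $(1)$.

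\emph{The equivalence with $(4)$.} Given $(5)$, statements (3) and (4) coincide verbatim, so the equivalence holds for free. Independently, $(4)\Rightarrow(3)$ also follows from $r_s\le r$.

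\emph{Expected difficulty.} I expect the main care to be needed in the identity for $r_s$, specifically in checking that uniform-norm completions are semisimple and that $r_{\mathcal B}$ restricts to a genuine norm on $\mathcal A$. A second delicate point is the non-unital case in $(1)\Rightarrow(2)$, where one must make sure that $|\cdot|_F=r$ really forces $F$ to be a boundary (suprema attained in $F$), using that $\widehat x$ vanishes at infinity.
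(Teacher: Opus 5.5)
Your proof is correct. The paper does not prove this statement; it quotes it from \cite{H}, so there is no internal argument to match. What you give is a self-contained derivation that uses only two inputs: the generating-set representation $|\cdot|=|\cdot|_F$ recalled in the introduction, and the fact that $\partial\mathcal A$ is the smallest closed boundary.

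\textbf{The cycle $(1)\Rightarrow(2)\Rightarrow(3)\Rightarrow(1)$.} This part is sound, including the non-unital subtlety you flagged. For $x\neq 0$ we have $|x|_F=r(x)>0$ and $\widehat x\in C_0(\Phi_{\mathcal A})$, so the supremum over the closed set $F$ is attained. Hence a closed set of uniqueness is a closed boundary and contains $\partial\mathcal A$.

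\textbf{The identity for $r_s$.} The real content linking $(4)$ and $(5)$ to the other statements is your identity $r_s(x)=\inf\{|x| : |\cdot| \text{ a uniform norm on } \mathcal A\}$. Both inequalities are right:
\begin{itemize}
\item A uniform norm has a completion satisfying $\|b^2\|=\|b\|^2$. That completion is a uniform algebra, hence semisimple, so uniform norms lie in $sN$.
\item For $|\cdot|\in sN$ with completion $\mathcal B$, the restriction $r_{\mathcal B}|_{\mathcal A}$ is a uniform norm on $\mathcal A$ dominated by $|\cdot|$.
\end{itemize}
A side benefit is that $r_s$ comes out the same whether one takes the infimum of $|x|$ or of $r_{\mathcal B}(x)$ over $sN$. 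So your argument does not depend on which of these formulations of Meyer's definition is intended.

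\textbf{The bookkeeping for $(4)$.} This is complete. You get $(4)\Rightarrow(3)$ from $r_s\le r$. In the other direction, $(3)\Rightarrow(1)\Rightarrow(5)$, and under $(5)$ statements $(3)$ and $(4)$ are literally the same. Your route also makes explicit the inequality $r_s\le r$, which the paper later uses without comment (for instance, ``$r_E(x)\ge r_s(x)>0$'' in Theorem \ref{4.2}).

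The only point you must take from the definition of $sN$ is that semisimple norms are algebra norms. This is implicit, since otherwise ``completion is semisimple'' would be meaningless.
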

	
	\begin{thm}\cite[Proposition 2.4]{H}\label{SH2}
		Let $\mathcal A$ be a semisimple commutative Banach algebra. Then the following statements are equivalent.
		\begin{enumerate}
			\item $\mathcal A$ has SEP.
			\item $\mathcal A$ has UUNP and $r_s=r_p$, i.e., $\mathcal A$ has UUNP and every norm dominates a semisimple norm.
			\item $\mathcal A$ has UUNP and $(P)$- property.
		\end{enumerate}
	\end{thm}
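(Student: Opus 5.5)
This statement is Theorem~\ref{SH2}, quoted from \cite[Proposition 2.4]{H}. I would prove it by going through Theorem~\ref{SH1} and the permanent-radius characterization of SEP from \cite{M1}. The basic fact to use is Meyer's theorem: $\mathcal A$ has SEP if and only if $r=r_p$ on $\mathcal A$. I would also use the chain $r_p\le r_s\le r$. Here $r_p\le r_s$ holds because, for a semisimple norm $|\cdot|$, the spectral radius in the completion is a uniform norm and is dominated by $|\cdot|$. The inequality $r_s\le r$ holds because $r$ itself is a semisimple norm: the completion of $(\mathcal A,r)$ is a uniform algebra.

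\emph{(1)$\Leftrightarrow$(2).} Assume SEP. Then $r=r_p\le r_s\le r$, so $r=r_s$, and Theorem~\ref{SH1}(5) gives UUNP. We also get $r_s=r_p$. The reformulation ``every norm dominates a semisimple norm'' should follow from this equality. Given $p\in N$, the spectral radius $r_{\mathcal A_p}$ is dominated by $p$, and SEP gives $r_{\mathcal A_p}=r$ on $\mathcal A$, which is a semisimple norm. Conversely, if UUNP holds and $r_s=r_p$, then Theorem~\ref{SH1} gives $r=r_s=r_p$, which is SEP.

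\emph{(2)$\Leftrightarrow$(3).} Assuming (1), hence $r=r_p$, the $(P)$-property is immediate. A nonzero closed ideal $I$ contains some $x\ne0$, and semisimplicity gives $r(x)>0$, so $r_p(x)>0$.

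The converse, that UUNP together with $(P)$ implies $r=r_p$, is the main obstacle. Here I would argue via hulls. First, $r_p$ is itself a uniform seminorm; this is a result from \cite{M1}. So $N_0=\{x:r_p(x)=0\}$ is a closed ideal, and $r_p=|\cdot|_F$ for the closed set
$$F=\{\varphi:|\varphi|\le r_p\}.$$
Next I would show that $N_0=\{0\}$. By the $(P)$-property, every nonzero closed ideal contains an element with $r_p>0$. A seminorm-kernel argument applied to $N_0$ itself then forces $N_0$ to be zero: if $N_0\ne\{0\}$, the $(P)$-property yields $x\in N_0$ with $r_p(x)>0$, a contradiction. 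Hence $r_p$ is a uniform norm. By UUNP, $r_p=r$.

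So the delicate step is establishing that $r_p$ is a uniform seminorm. This needs the submultiplicativity and square property of $r_p$ taken from \cite{M1}, and I would cite that result rather than reprove it.
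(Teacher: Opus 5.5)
Your arguments for (1)$\Rightarrow$(2), (2)$\Rightarrow$(1) and (1)$\Rightarrow$(3) are sound. They rest on $r_p\le r_s\le r$, Theorem~\ref{SH1}(5), and the elementary equivalence of SEP with $r=r_p$.

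The gap is in (3)$\Rightarrow$(1). Your claim that $r_p$ is a uniform seminorm is false in general, so it cannot be cited from Meyer. Take the disc algebra $A(\overline{\mathbb D})$ and fix $0<\varepsilon<1$.
\begin{itemize}
\item The set $F=\{0\}\cup\{\varepsilon/n:n\ge 1\}$ is a set of uniqueness. So $p=|\cdot|_F\in N$, and $r_{\mathcal A_p}(z)=\lim_n p(z^n)^{1/n}=\varepsilon$. Hence $r_p(z)=0$.
\item A sequence in the open disc accumulating at $1-\varepsilon$ gives, in the same way, $r_p(1-z)=0$.
\item On the other hand $r_p(1)=1$, because every completion $\mathcal A_p$ is unital.
\end{itemize}
Thus $r_p(1)>r_p(z)+r_p(1-z)$. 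The set $\{x:r_p(x)=0\}$ is not closed under addition, so your $N_0$ is not an ideal. In fact here $r_p(f)=\min_{\overline{\mathbb D}}|f|$, which is not submultiplicative either. An infimum of seminorms is a seminorm only when the family is downward directed, and nothing forces that here.

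The repair is to run your kernel argument for each norm separately. Fix $p\in N$.
\begin{itemize}
\item $r_{\mathcal A_p}|_{\mathcal A}$ is a uniform seminorm, being the spectral radius of the commutative Banach algebra $\mathcal A_p$.
\item Quasi-inverses in $\mathcal A$ remain quasi-inverses in $\mathcal A_p$. Hence $\sigma_{\mathcal A_p}(a)\setminus\{0\}\subset\sigma_{\mathcal A}(a)\setminus\{0\}$, and so $r_{\mathcal A_p}\le r\le\|\cdot\|$ on $\mathcal A$.
\item Therefore $N_p=\{a\in\mathcal A:r_{\mathcal A_p}(a)=0\}$ is a closed ideal of $\mathcal A$.
\item If $N_p\neq\{0\}$, the $(P)$-property gives $x\in N_p$ with $r_p(x)>0$. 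This contradicts $r_p(x)\le r_{\mathcal A_p}(x)=0$.
\item So $r_{\mathcal A_p}|_{\mathcal A}$ is a uniform norm, and UUNP forces it to equal $r$.
\end{itemize}
Since $p$ was arbitrary, $r_p=r$, which is SEP. Note that $(P)$ enters only through the trivial bound $r_p\le r_{\mathcal A_p}$.

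A smaller omission concerns the reformulation in (2): you showed only that SEP implies every norm dominates a semisimple norm. For the converse under UUNP, suppose $p\ge q$ with $q$ semisimple. Then $r_{\mathcal A_p}\ge r_{\mathcal A_q}$ on $\mathcal A$. Since $\mathcal A_q$ is semisimple, $r_{\mathcal A_q}|_{\mathcal A}$ is a uniform norm, hence equal to $r$.
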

	
	Now, we move towards our results.
	
	\begin{lem} \label{3.1} Let $\mathcal A$ be a commutative  Banach algebra, and let $E$ be the closed subalgebra of $\mathcal A$ generated by the collection $\{E_i\}_{i \in I}$ of closed subalgebras of $\mathcal A$ such that each $E_i$ has UUNP. Then  $\bigcup_{i \in I} \partial E_{i} \subset \partial E$.
	\end{lem}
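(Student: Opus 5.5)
Throughout, read the inclusion $\bigcup_{i\in I}\partial E_i\subset\partial E$ via restriction of characters. For each $i$ let $\rho_i:\Phi_E\cup\{0\}\to\Phi_{E_i}\cup\{0\}$ be $\rho_i(\varphi)=\varphi|_{E_i}$. The claim to prove is that every $\psi\in\partial E_i$ is of the form $\varphi|_{E_i}$ for some $\varphi\in\partial E$. So fix $i$ and set
$$F_i=\{\varphi|_{E_i}:\varphi\in\partial E\}\cap\Phi_{E_i}.$$
The goal is $\partial E_i\subset F_i$. The strategy is to show that $F_i$ is a closed set of uniqueness for $E_i$, and then invoke Theorem \ref{SH1}.

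\textbf{Step 1: spectral radii agree.} For $a\in E_i\subset E\subset\mathcal A$, all three algebras carry the norm of $\mathcal A$. Hence $r_{E_i}(a)=\lim\|a^n\|^{1/n}=r_E(a)$.

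\textbf{Step 2: $F_i$ carries the spectral radius of $E_i$.} The Shilov boundary $\partial E$ is a boundary for $\Gamma(E)$, so $r_E(a)=\sup_{\varphi\in\partial E}|\varphi(a)|$. Characters of $E$ vanishing on $E_i$ contribute $0$ for $a\in E_i$. Therefore
$$r_{E_i}(a)=\sup_{\psi\in F_i}|\psi(a)|\qquad(a\in E_i).$$
Thus $|\cdot|_{F_i}=r_{E_i}$. Since $E_i$ has UUNP, $r_{E_i}$ is its unique uniform norm, so $F_i$ generates the uniform norm $r_{E_i}$.

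\textbf{Step 3: $F_i$ is closed in $\Phi_{E_i}$.} This is the step I expect to be the main obstacle, because in the nonunital case $\partial E$ need not be compact. I would pass to one-point compactifications. The set $\Phi_E\cup\{0\}$ is weak$^*$ compact and $\partial E\cup\{0\}$ is a closed subset of it, hence compact. The map $\rho_i$ is weak$^*$ continuous, so $\rho_i(\partial E\cup\{0\})=F_i\cup\{0\}$ is compact in $\Phi_{E_i}\cup\{0\}$. It follows that $F_i$ is closed in $\Phi_{E_i}$.

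\textbf{Step 4: conclude via Theorem \ref{SH1}.} By Steps 2 and 3, $F_i$ is a closed set of uniqueness for $E_i$. Since $E_i$ has UUNP, Theorem \ref{SH1}(2) says $\partial E_i$ is the smallest closed set of uniqueness, so $\partial E_i\subset F_i$.

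Alternatively, one can avoid UUNP here. Each nonzero $\widehat a\in C_0(\Phi_{E_i})$ attains its supremum on the closed set $F_i$, so $F_i$ is a closed boundary and hence contains $\partial E_i$ by the definition of the Shilov boundary.

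Taking the union over $i\in I$ gives $\bigcup_{i}\partial E_i\subset\partial E$ under the identification by restriction.
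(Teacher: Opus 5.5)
Your proof is correct and follows essentially the same route as the paper: restrict characters via $\rho_i$ (the paper's $\Psi_j$), use weak$^*$ compactness of $\partial E\cup\{0\}$ to see that $F_i$ is closed in $\Phi_{E_i}$, and conclude $\partial E_i\subset F_i$ from Theorem \ref{SH1}(2). Your Step 2 supplies the justification that $|\cdot|_{F_i}=r_{E_i}$, which the paper only asserts. Your closing remark is also right: $F_i$ is already a closed boundary for $\Gamma(E_i)$, so this inclusion, read as $\partial E_i\subset\rho_i(\partial E)$, holds without the UUNP hypothesis.
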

	\begin{proof}
		We may assume that $\Phi_\mathcal{A}$ is compact by \cite[Theorem 3.1]{H1}. For $j \in I$ define $\Psi_{j} : \Phi_E \cup \{0\} \rightarrow{\Phi_{E_{j}}} \cup \{0\}$ by $\Psi_{j}(\varphi) = \varphi_{|_{E_{j}}}$. Notice that  $\Psi_{j}$ is   $w^\ast$- continuous and $\Phi_\mathcal  A \cup \{0\}$ is a $w^\ast$- compact subset of dual of $\mathcal  A$ \cite[Section 2.2]{K}. Since $\partial E \cup \{0\}$ is a closed subset of the compact set $\Phi_E \cup \{0\}$, the set $\partial E \cup \{0\}$ is compact and  the image $\Psi_{j}(\partial E \cup \{0\})$ is a closed subset of $\Phi_{E_{j}} \cup \{0\}$. Let $F = \Psi_{j}(\partial E \cup \{0\}) \cap \Phi_{E_{j}}$. To show that $F$ is a closed subset of $\Phi_{E_{j}}$, let $\varphi \in \Phi_{E_{j}}$ be a limit point of $F$. Then $\varphi$ is also a limit point of $\Psi_{j}(\partial E \cup \{0\})$. Since $\Psi_{j}(\partial E \cup \{0\})$ is closed in $\Phi_{E_{j}} \cup \{0\}$, we have $\varphi \in \Psi_{j}(\partial E \cup \{0\})$. Therefore, $\varphi \in F$, which implies that $F$ is closed. For $x \in E_{j}$, define
		\begin{align*}
			|x| &= \sup\{ |\varphi(x)| : \varphi \in F \} \\
			&=  \sup\{ |\varphi(x)| : \varphi \in \Psi_{j}(\partial E \cup \{0\} ) \setminus \{0\}\}.\
		\end{align*}
		Then $|\cdot|$ is a norm on $E_{j}$. Consequently, $F$ is a closed set of uniqueness for $E_{j}$, which, by Theorem \ref{SH1}, yields $\partial E_{j} \subset F \subset \Psi_{j}(\partial E) \setminus \{0\}$. This implies $\Psi_{j}^{-1}(\partial E_{j}) \subset \partial E$. As $j \in I$ is arbitrary, we get $\bigcup_{i \in I} \partial E_i \subset \partial E$.
	\end{proof}
	
	\begin{thm}\label{4.2}
		Let $\mathcal A$ be a commutative Banach algebra. Then there always exists largest closed subalgebra of $\mathcal A$ which is weakly regular.
	\end{thm}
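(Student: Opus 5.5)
The plan is to take $\mathcal W$ to be the closed subalgebra of $\mathcal A$ generated by all closed weakly regular subalgebras of $\mathcal A$, and to show that $\mathcal W$ is itself weakly regular. The family is nonempty: $\{0\}$, or any closed subalgebra with at most one character, is trivially weakly regular. If $\mathcal W$ is weakly regular, it contains every weakly regular closed subalgebra and is therefore the largest one. Following the model of Inoue--Takahasi for regularity, I would first reduce to the unital situation, as in Lemma \ref{3.1}, via \cite[Theorem 3.1]{H1}. The point of the reduction is that the character spaces involved become compact, or compact after adjoining $0$. Write $\{E_i\}_{i\in I}$ for the family of weakly regular closed subalgebras, and $\Psi_i:\Phi_{\mathcal W}\cup\{0\}\to\Phi_{E_i}\cup\{0\}$ for the restriction maps.

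\textbf{Step 1.} Take a nonempty proper closed set $F\subset\Phi_{\mathcal W}$. Since $\mathcal W$ is generated by the $E_i$, a character of $\mathcal W$ is determined by its restrictions to the $E_i$. So for $\psi\notin F$ there is some index $i$ with $\Psi_i(F)\not\ni\Psi_i(\psi)$, or more generally with $\Psi_i(F)\cup\{0\}$ a proper subset of $\Phi_{E_i}\cup\{0\}$. To see this, suppose that for every $i$ the image $\Psi_i(F)$ were all of $\Phi_{E_i}$, possibly together with $0$. I would use compactness of $F\cup\{0\}$ in the $w^\ast$-topology to argue that $F$ would then already be a boundary, indeed all of $\Phi_{\mathcal W}$. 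The argument compares a finite product $\widehat{x_1}\cdots\widehat{x_n}$ of generators with $x_k\in E_{i_k}$ and uses that the images are closed, exactly as in the proof of Lemma \ref{3.1}.

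\textbf{Step 2.} Once an index $i$ is found with $F_i:=\Psi_i(F)\cap\Phi_{E_i}$ a proper closed subset of $\Phi_{E_i}$, weak regularity of $E_i$ gives a nonzero $x\in E_i$ with $\widehat x|_{F_i}=0$. Viewed in $\mathcal W$, this $x$ satisfies $\widehat x(\varphi)=\varphi|_{E_i}(x)=0$ for all $\varphi\in F$. Moreover $x\neq0$ in $\mathcal W$. So $\widehat x$ vanishes on $F$ and is nonzero, which is weak regularity of $\mathcal W$. Semisimplicity is needed here so that a nonzero $x$ has a nonzero transform; I would inherit it from $\mathcal A$, or assume it as the paper implicitly does.

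\textbf{Main obstacle.} The real difficulty is Step 1: showing that a proper closed $F\subset\Phi_{\mathcal W}$ has proper image under at least one restriction $\Psi_i$. A priori $F$ could project onto every $\Phi_{E_i}$ while still being proper in $\Phi_{\mathcal W}$, since $\Phi_{\mathcal W}$ sits inside the product of the $\Phi_{E_i}\cup\{0\}$ and surjective coordinate projections do not force $F$ to be everything.

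I would first try to exploit weak regularity in each $E_i$ more strongly. Weak regularity implies UUNP with $\partial E_i=\Phi_{E_i}$, because every proper closed set is annihilated by some nonzero element, so no proper closed set is a set of uniqueness. Lemma \ref{3.1} then gives $\Psi_i^{-1}(\Phi_{E_i})\subset\partial\mathcal W$ for every $i$. That yields $\partial\mathcal W=\Phi_{\mathcal W}$, minus possibly characters vanishing on all $E_i$; but such characters vanish on $\mathcal W$ and so do not exist.

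Next I would aim to replace $F$ by a suitable closed neighbourhood-complement. Pick $\psi\notin F$ and a basic neighbourhood $V$ of $\psi$ disjoint from $F$, defined by finitely many generators from $E_{i_1},\dots,E_{i_n}$. I would then try to build $x$ as a product $x_1\cdots x_n$ with $x_k\in E_{i_k}$, each vanishing on the part of $\Psi_{i_k}(F)$ lying outside the corresponding coordinate condition. Controlling this product, so that it vanishes on all of $F$ while staying nonzero at $\psi$ or somewhere, is where I expect the genuine work, and possibly a weakening of the claim, to lie.
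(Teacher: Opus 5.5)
There is a genuine gap, and it is the one you flag yourself. Step 1 cannot be proved because it is false, and so is the statement for subalgebras.

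Both of your supporting claims fail:
\begin{enumerate}
\item A character of $\mathcal W$ is determined by its restrictions to the $E_i$ only jointly. So a proper closed $F$ can have $\Psi_i(F)=\Phi_{E_i}$ for every $i$.
\item Lemma \ref{3.1} does not give $\Psi_i^{-1}(\partial E_i)\subset\partial\mathcal W$. Its argument only yields $\partial E_i\subset\Psi_i(\partial\mathcal W)$, so $\partial\mathcal W=\Phi_{\mathcal W}$ does not follow.
\end{enumerate}
The paper's proof passes over exactly this point. It asserts that a closed $F$ not containing $\partial E$ fails to contain some $\partial E_{i_0}$. This rests on the inclusion $\Phi_E\subset\bigcup_i\Phi_{E_i}$ and on the preimage form of Lemma \ref{3.1}. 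These identifications are legitimate for ideals, where $\Phi_J=\bigcup_i(\Phi_{\mathcal A}\setminus h(J_i))$, but not for subalgebras.

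Your product idea breaks for the same reason. Inoue--Takahasi can pick $x_k$ with $\widehat{x_k}(\Psi_{i_k}(\psi))\neq 0$. Weak regularity only makes $\widehat{x_k}$ nonzero somewhere in $U_k$, so the product $x_1\cdots x_n$ may vanish identically on $\Phi_{\mathcal W}$.

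\textbf{Counterexample.} Let $Y=\overline{\mathbb D}\times[0,1]$, let $A(\overline{\mathbb D})$ be the disc algebra, and set
\[
\mathcal A=\{f\in C(Y): f(\cdot,t)\in A(\overline{\mathbb D})\ \text{for } 0\le t\le 1/2\},\qquad B=\{g\in C(Y): g(\cdot,0)\in A(\overline{\mathbb D})\}.
\]
\begin{enumerate}
\item Splitting off the ideal of functions vanishing on $\overline{\mathbb D}\times[0,1/2]$ (respectively on $\overline{\mathbb D}\times\{0\}$) gives $\Phi_{\mathcal A}=\Phi_B=Y$.
\item $B$ is weakly regular. Every nonempty open subset of $Y$ meets $\overline{\mathbb D}\times(0,1]$, and a continuous bump supported there lies in $B$.
\item Put $\theta(t)=\max(0,2t-1)$. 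Then $E_1=\{g(z,\theta(t)):g\in B\}\cong B$ and $E_2=\{c(t):c\in C[0,1]\}\cong C[0,1]$ are weakly regular closed subalgebras of $\mathcal A$.
\item They generate $\mathcal A$. Approximate $f\in\mathcal A$ by $\sum_k c_k(t)f(z,t_k)$, where $(c_k)$ is a fine partition of unity in $t$ and $t_k\in\operatorname{supp}c_k$. If $\operatorname{supp}c_k$ meets $[0,1/2]$, take $t_k\le 1/2$. The term is then a limit of $c_k(t)p(z)$, with $c_k\in E_2$ and $p$ a polynomial in $z\in E_1$. Otherwise $\operatorname{supp}c_k\subset(1/2,1]$, and the term already lies in $E_1$.
\item $\mathcal A$ is not weakly regular. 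Suppose $f\in\mathcal A$ vanishes off $V=\{|z|<1/2\}\times[0,1/2)$. For each $t<1/2$, the slice $f(\cdot,t)$ is a disc-algebra function vanishing on an annulus, so $f=0$.
\end{enumerate}
Any weakly regular closed subalgebra containing $E_1$ and $E_2$ would equal $\mathcal A$, so $\mathcal A$ has no largest weakly regular closed subalgebra.

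In this example, $F=Y\setminus V$ satisfies $\Psi_1(F)=\Phi_{E_1}$ and $\Psi_2(F)=\Phi_{E_2}$, which is precisely the situation you feared. Also $\partial\mathcal A=(\mathbb T\times[0,1/2])\cup(\overline{\mathbb D}\times[1/2,1])$ is a proper subset of $\Phi_{\mathcal A}=\Psi_1^{-1}(\partial E_1)$.
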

	\begin{proof}
		Let $(E_i)_{i \in I}$ be the collection of all weakly regular closed subalgebras of $\mathcal{A}$. Since weak regularity implies UUNP, each $E_i$ possesses the UUNP. Let $E$ be the closed subalgebra of $\mathcal A$ generated by $\bigcup_{i \in I} E_i$. Then, by Lemma \ref{3.1}, we have $\bigcup_{i \in I} \partial E_i \subset \partial E$. Now, let $\varphi \in \Phi_E$. Then $\varphi |_{E_{i'}} \neq 0$ for some $i' \in I$, and hence $\Phi_E \subset \bigcup_{i \in I} \Phi_{E_i}$. Because each $E_i$ is weakly regular, $\partial E_i = \Phi_{E_i}$ for all $i \in I$. Consequently, we obtain the chain of inclusions $\bigcup_{i \in I} \partial E_i \subset \partial E \subset \Phi_E \subset \bigcup_{i \in I} \partial E_i$, which implies that $\partial E = \Phi_E$.
		
		Now, our claim is to show that $E$ has UUNP. For that let $F$ be a closed subset of $\Phi_E$ not containing $\partial E$. Then $F$ does not contain $\partial E_{i_{0}}$ for some $i_{0}$.  Define $\Psi_{i_0}$ as in Lemma \ref{3.1}. Then $\Psi_{i_0}(F)$ is closed, it is disjoint from $\partial E_{i_{0}}$, and hence, by Theorem \ref{SH1}, there exists a nonzero $x \in E_{i_{0}}$ such that $\widehat{x}\{\Psi_{i_0}(F)\} = \widehat{x}(F) = \{0\}$ and $r_{s}(x) > 0$. As $r_{E}(x) \geq r_{s}(x) > 0$, by  Theorem \ref{SH1}, $E$ has UUNP. As $E$ has UUNP and $\Phi_E = \partial E$, it follows, by Theorem \ref{SH1}, that $E$ is weakly regular.
	\end{proof}
	
	\begin{cor}
		Let $\mathcal A$ be a commutative Banach algebra. Then there exists a largest closed ideal in $\mathcal A$ which is weakly regular.
	\end{cor}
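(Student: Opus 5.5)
We mimic the proof of Theorem \ref{4.2}, replacing subalgebras by ideals. Let $(I_j)_{j\in J}$ be the family of all closed ideals of $\mathcal A$ that are weakly regular as Banach algebras. The family is nonempty, since $\{0\}$ is trivially weakly regular, and if the largest ideal turns out to be $\{0\}$ there is nothing more to prove. Let $I$ be the closed linear span of $\bigcup_{j} I_j$. A sum of ideals is an ideal, and so is its closure, hence $I$ is a closed ideal. It is also a closed subalgebra containing every $I_j$, so it contains the closed subalgebra $E$ generated by the $I_j$. Conversely, $E$ contains every finite sum of elements of the $I_j$, hence their closure, so $I=E$. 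In particular $I$ is exactly the closed subalgebra generated by the family $\{I_j\}$ in the sense of Lemma \ref{3.1}.

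The plan is now to run the argument of Theorem \ref{4.2} verbatim with $E=I$ and $E_i=I_j$. Each $I_j$ is weakly regular, hence has UUNP, so Lemma \ref{3.1} gives $\bigcup_j\partial I_j\subset\partial I$. Any $\varphi\in\Phi_I$ is nonzero on some $I_j$, so $\Phi_I$ is contained in the union of the $\Phi_{I_j}$. For ideals this can be stated concretely: $\Phi_{I_j}=\Phi_{\mathcal A}\setminus h(I_j)$ and $\Phi_I=\Phi_{\mathcal A}\setminus h(I)=\bigcup_j \Phi_{I_j}$. Weak regularity gives $\partial I_j=\Phi_{I_j}$, so $\partial I=\Phi_I$. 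The UUNP step is then identical: for a closed $F\subset\Phi_I$ not containing $\partial I$, we pick $j_0$ with $\partial I_{j_0}\not\subset F$. We restrict to $I_{j_0}$ and use Theorem \ref{SH1} there to get $x\in I_{j_0}\subset I$ with $\widehat x|F=0$ and $r(x)>0$. Theorem \ref{SH1} then shows that $I$ has UUNP. Together with $\partial I=\Phi_I$, this yields weak regularity of $I$.

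Alternatively, we can quote Theorem \ref{4.2} directly. Let $E_0$ be the largest weakly regular closed subalgebra, so every $I_j\subset E_0$ and hence $I\subset E_0$. It then remains to show that $I$ itself is weakly regular. This is where the ideal structure matters, and it is why I prefer the first route of rerunning the proof with $E=I$.

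Maximality is immediate: any weakly regular closed ideal is some $I_j$, and so lies in $I$. The main obstacle is the restriction step in the UUNP argument. We need $\Psi_{j_0}(F)$ to be closed in $\Phi_{I_{j_0}}$ and to miss $\partial I_{j_0}$, or at least not to contain it. For ideals this is cleaner than for subalgebras, because restriction identifies $\Phi_{I_{j_0}}$ with the open subset $\Phi_{\mathcal A}\setminus h(I_{j_0})$ of $\Phi_I$, and $F\cap\Phi_{I_{j_0}}$ is relatively closed there. One then applies weak regularity of $I_{j_0}$ directly to the proper closed subset $F\cap\Phi_{I_{j_0}}$. That subset is proper because $\partial I_{j_0}=\Phi_{I_{j_0}}\not\subset F$. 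This produces a nonzero $x\in I_{j_0}$ vanishing on $F\cap\Phi_{I_{j_0}}$. Since $\widehat x$ also vanishes on $h(I_{j_0})$, it vanishes on all of $F$, and $x\ne0$ forces $r_I(x)>0$ by semisimplicity. This bypasses any subtlety with $\Psi_{j_0}$.
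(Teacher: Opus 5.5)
Your proof is correct and follows the route the paper indicates: take the closed ideal generated by all weakly regular closed ideals and rerun Lemma~\ref{3.1} and Theorem~\ref{4.2} with ideals in place of subalgebras. Your restriction step uses two facts: $\Phi_{I_{j_0}}=\Phi_{\mathcal A}\setminus h(I_{j_0})$ is open in $\Phi_I=\bigcup_j\Phi_{I_j}$, and $\widehat x$ vanishes on $h(I_{j_0})$ for $x\in I_{j_0}$; this is what makes the paper's sketched $\Psi_{i_0}$ step rigorous for ideals, and it already gives weak regularity of $I$ directly.
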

	
	\begin{proof}
		Let $(J_i)_{i \in I}$ be the collection of all closed ideals in $\mathcal A$ which are weakly regular, and let $J$ be the closed ideal generated by $\bigcup_i J_i$. Then $J$ is weakly regular.	The proof is similar to that of Lemma \ref{3.1} and Theorem \ref{4.2}.
	\end{proof}
	
	\begin{thm}\label{3.4}
		Let $\mathcal A$ be a commutative Banach algebra and let $(J_i)_{i \in I}$ be the collection of all closed ideals in $\mathcal A$ each having UUNP. Let $J$ be the closed ideal generated by $\bigcup_i J_i$. Then $\overline{\bigcup_{i \in I}\partial J_i} = \partial J$.
	\end{thm}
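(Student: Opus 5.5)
We prove the two inclusions separately, working throughout with the identification of the Gel'fand space of a closed ideal $I$ with the open set $\Phi_{\mathcal A}\setminus h(I)$. Under this identification $\Phi_{J_i}\subset \Phi_J\subset\Phi_{\mathcal A}$. Moreover, since the closed ideal generated by $\bigcup_i J_i$ is the closure of the sum $\sum_i J_i$, we have $h(J)=\bigcap_i h(J_i)$. Hence $\Phi_J=\bigcup_i \Phi_{J_i}$, and each $\Phi_{J_i}$ is an open subset of $\Phi_J$.

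\emph{Step 1: $\bigcup_i \partial J_i\subset \partial J$.} This is the ideal version of Lemma \ref{3.1}, and we would simply repeat its argument. Fix $j$. The restrictions to $J_j$ of the characters in $\partial J$ give a subset $F=\partial J\cap \Phi_{J_j}$, which is closed in $\Phi_{J_j}$ because $\Phi_{J_j}$ carries the relative topology. For $x\in J_j\subset J$ we have $\sup_{\partial J}|\widehat x|=r_J(x)=r_{J_j}(x)$. The reason is that $\widehat x$ vanishes on $h(J_j)$, so the supremum over $\partial J$ is attained inside $\Phi_{J_j}$. (If the supremum is $0$, then $x=0$ by semisimplicity, which we assume implicitly as the paper does.) Thus $F$ is a set of uniqueness for $J_j$, in fact a boundary. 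Since $J_j$ has UUNP, Theorem \ref{SH1}(2) gives $\partial J_j\subset F\subset \partial J$. Because $\partial J$ is closed, this yields $\overline{\bigcup_i\partial J_i}\subset\partial J$.

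\emph{Step 2: $\partial J\subset \overline{\bigcup_i\partial J_i}$.} Put $B=\overline{\bigcup_i\partial J_i}$, the closure taken in $\Phi_J$. It suffices to show that $B$ is a closed boundary for $J$, since $\partial J$ is the intersection of all closed boundaries. Each $x\in\sum_i J_i$ is a finite sum $x=x_1+\dots+x_n$ with $x_k\in J_{i_k}$. The plan is to use that $\Phi_J=\bigcup\Phi_{J_i}$ and that, for any $i$, the product $xy$ with $y\in J_i$ lies in $J_i$. This lets us show $\sup_{\Phi_{J_i}}|\widehat x|=\sup_{\partial J_i}|\widehat x|$.

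\textbf{This localisation is the main obstacle.} The restriction of $\widehat x$ to $\Phi_{J_i}$ need not be the Gel'fand transform of an element of $J_i$, so the claim is not immediate. We would argue via the square property and ideal multiplication. For $\varphi\in\Phi_{J_i}$, choose $y\in J_i$ with $\widehat y(\varphi)=1$. Then for every $n$,
\[
|\widehat x(\varphi)|^n=|\widehat{x^ny}(\varphi)|\le \sup_{\partial J_i}|\widehat{x^n y}|\le \Big(\sup_{\partial J_i}|\widehat x|\Big)^n r(y),
\]
where the first inequality uses $x^ny\in J_i$ and the fact that $\partial J_i$ is a boundary for $J_i$. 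Taking $n$-th roots and letting $n\to\infty$ gives $|\widehat x(\varphi)|\le\sup_{\partial J_i}|\widehat x|$.

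Combining, for $x\in\sum_i J_i$ we get $r_J(x)=\sup_i\sup_{\Phi_{J_i}}|\widehat x|=\sup_B|\widehat x|$. By density of $\sum_i J_i$ in $J$ and continuity of $r$, this extends to $r_J(x)=\sup_B|\widehat x|$ for all $x\in J$. So $B$ determines the spectral radius on $J$. Each $\widehat x$ lies in $C_0(\Phi_J)$ and $B$ is closed, so on the compactification this supremum is attained on $B\cup\{\infty\}$; when it is nonzero it is attained on $B$ itself. Hence $B$ is a closed boundary, and therefore $\partial J\subset B$. Together with Step 1 this gives $\overline{\bigcup_i\partial J_i}=\partial J$.
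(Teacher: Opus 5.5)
Your proof is correct. Step 1 is essentially the paper's Lemma~\ref{3.1} specialised to ideals: there $\Psi_j(\partial J\cup\{0\})\cap\Phi_{J_j}$ is exactly your $\partial J\cap\Phi_{J_j}$. Since you observe that this set is a closed boundary for $J_j$, the inclusion $\partial J_j\subset\partial J$ already follows from the definition of the Shilov boundary, without invoking UUNP.

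Step 2 takes a genuinely different route from the paper.
\begin{itemize}
\item The paper argues pointwise and by contradiction. Given $\varphi_0\in\partial J\cap\Phi_{J_{i_0}}$ outside $\partial J_{i_0}$, it uses the local peaking characterisation \cite[Corollary 3.3.4]{K} to obtain $x\in J$ peaking in a neighbourhood $V$ of $\varphi_0$ that misses $\partial J_{i_0}$. It then builds $x^{n}y\in J_{i_0}$ that also peaks there.
\item You instead prove the global identity $\sup_{\Phi_{J_i}}|\widehat x|=\sup_{\partial J_i}|\widehat x|$ by the $n$-th root trick. The key is that $y$ is normalised at the same point $\varphi$ where you evaluate. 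You then check directly that $B$ is a closed boundary.
\end{itemize}

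What your route buys:
\begin{itemize}
\item It avoids the peak-point machinery and the delicate question of where $\widehat x$ actually peaks. As written, the paper's claim $|\varphi_0(z_{n_0})|=1$ tacitly needs $|\widehat x(\varphi_0)|=1$, and $y$ must be taken in $J_{i_0}$ rather than $J$.
\item Step 2 never uses UUNP.
\item Your estimate holds verbatim for every $x\in J$, since $x^ny\in J_i$ whenever $y\in J_i$. So the detour through finite sums in $\sum_i J_i$ and density is unnecessary.
\end{itemize}

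What the paper's pointwise argument buys is the sharper conclusion $\partial J\cap\Phi_{J_i}\subset\partial J_i$. Hence $\partial J=\bigcup_i\partial J_i$ with no closure needed. Your estimate gives this as well, with one more line. The set $\partial J_i\cup(\Phi_J\setminus\Phi_{J_i})$ is closed in $\Phi_J$, because its complement is open in the open set $\Phi_{J_i}$. By your identity it is a boundary for $J$, so $\partial J$ is contained in it.
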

	\begin{proof}
		Note that the Gel'fand space of  $J$ is homeomorphic to $\bigcup_{i \in I} \Phi_{J_i}$, where $\Phi_{J_i} = \Phi_{\mathcal A} \setminus h(J_i)$. Now, our claim is to show that $\overline{\bigcup_{i \in I} \partial J_i} =  \partial J$. By Lemma \ref{3.1}, we have $\overline{\bigcup_{i \in I} \partial J_i} \subset \partial J$. For the reverse inclusion, we prove that the image of $\partial J$ under the mapping $\Psi_i$, as defined in Lemma \ref{3.1}, is equal to $\partial J_i$ for each $i \in I$. Suppose to the contradiction there exists $i_0 \in I$ such that $\Psi_{i_0}(\partial J) \nsubseteq \partial J_{i_0}$.  Then there exists $\varphi_0 \in \Psi_{i_0}(\partial J)$ such that $\varphi_0 \notin \partial J_{i_0}$. As $\partial J_{i_0}$ is closed in $\Phi_{J_{i_0}}$, there exists an open subset $U$ of $\Phi_{J_{i_0}}$ such that $\varphi_0 \in U$ and $U \cap \partial J_{i_0} = \emptyset$. Now, let $V = \Psi_{i_0}^{-1}(U)$. Then $V$ is open in $\Phi_J$ and contains $\varphi_0 \in \partial J$. Then, by \cite[Corollary 3.3.4]{K}, there exists some $x \in J$ such that $$\|x|_{\Phi_J \setminus V}\|_{\infty} < \|x|_{V}\|_{\infty}.$$ By multiplying with suitable factor we may assume that $\|x|_{V}\|_{\infty} = 1$. For $\varphi_0 \in \Phi_{J_{i_0}}$ choose $y \in J$ such that $\varphi_0(y) = 1$. Define $z_n = x^ny$. As $J_{i_0}$ is an ideal, $z_n \in J_{i_0}$ for each $n$. For $K = \|y|_{{\Phi_J \setminus V}}\|_\infty $, choose large enough $n_0$ such that $\|x^{n_0}|_{\Phi_J \setminus V}\|_{\infty} < \frac{1}{K+1}$. Now,
		\begin{align*}
			\|z_{n_0}|_{\Phi_{J} \setminus V}\|_{\infty} &= \|x^{n_0}y|_{\Phi_{J} \setminus V}\|_{\infty} \\
			&\leq K\|x^{n_0}|_{\Phi_J \setminus V}\|_{\infty} \\
			&< 1.
		\end{align*}
		Note that $\|z_{n_0}|_{\Phi_{J_{i_0}} \setminus U}\|_{\infty} \leq \|z_{n_0}|_{\Phi_{J} \setminus V}\|_{\infty} < 1$ and $|\varphi_0(z_{n_0})| = 1$. This implies that $\|z_{n_0}|_{\Phi_{J_{i_0}} \setminus U}\|_{\infty} < \|z_{n_0}|_{U}\|_{\infty}$, that is, $|\widehat{z_{n_0}}|$ attains its maximum outside $\partial J_{i_0}$. As  $\partial J_{i_0} \cap U = \emptyset$, this is not possible.  Hence our assumption is wrong. Therefore, we have $\Psi_{i}(\partial J) = \partial{J_{i}}$ for all $i \in I$ which implies that $\partial J \subset \bigcup_{i \in I} \partial J_{i}$. Hence the proof.
	\end{proof}
	
	\begin{prop}\label{3.4.1}
		Let $\mathcal{A}$ be a commutative Banach algebra. Then there always exists largest closed ideal having UUNP.
	\end{prop}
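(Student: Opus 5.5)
We take $(J_i)_{i\in I}$ to be the collection of all closed ideals of $\mathcal A$ having UUNP. This collection is nonempty, since the zero ideal trivially qualifies, or we restrict to nonzero ones if there are any. Let $J$ be the closed ideal generated by $\bigcup_i J_i$. Every closed ideal with UUNP is one of the $J_i$, so every such ideal lies in $J$. It therefore suffices to show that $J$ itself has UUNP, and we use criterion (3) of Theorem \ref{SH1}.

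We first record the setup. Theorem \ref{3.4} gives $\partial J=\overline{\bigcup_i \partial J_i}$, and $\Phi_J=\bigcup_i \Phi_{J_i}$ with $\Phi_{J_i}=\Phi_{\mathcal A}\setminus h(J_i)$ open in $\Phi_J$. Now let $F\subset\Phi_J$ be closed with $\partial J\not\subset F$. Since $F$ is closed and $\partial J$ is the closure of $\bigcup_i\partial J_i$, the set $F$ cannot contain every $\partial J_i$. So there is an $i_0$ with $\partial J_{i_0}\not\subset F$.

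We then restrict to $J_{i_0}$. Put $F_0=F\cap\Phi_{J_{i_0}}$. This is a relatively closed subset of $\Phi_{J_{i_0}}$ which does not contain $\partial J_{i_0}$. Because $J_{i_0}$ is an ideal, a character of $J$ that is nonzero on $J_{i_0}$ is determined by its restriction, so $\Psi_{i_0}$ identifies $\Phi_{J_{i_0}}$ with an open subset of $\Phi_J$ and the topologies agree. By Theorem \ref{SH1}(3) applied to $J_{i_0}$, there is $a\in J_{i_0}$ with $r_{J_{i_0}}(a)>0$ and $\widehat a|_{F_0}=0$. For points of $F$ outside $\Phi_{J_{i_0}}$, i.e.\ in $h(J_{i_0})$, we have $\widehat a=0$ automatically because $a\in J_{i_0}$. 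Hence $\widehat a|_F=0$. Finally $r_J(a)=\sup_{\Phi_J}|\widehat a|\ge r_{J_{i_0}}(a)>0$, and indeed the two are equal since $\widehat a$ vanishes off $\Phi_{J_{i_0}}$. Theorem \ref{SH1}(3) for $J$ then gives UUNP.

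The main obstacle is making the identification between $\Phi_{J_{i_0}}$ and a subset of $\Phi_J$ rigorous, so that closedness of $F_0$ and the application of Theorem \ref{SH1} are justified. Every character of $J_{i_0}$ extends uniquely to $J$ (and to $\mathcal A$) via $\varphi(x)=\varphi(xu)/\varphi(u)$ for some $u$ with $\varphi(u)\neq0$. We will check continuity of this extension map with respect to the Gel'fand topologies, using the standard ideal–hull identification recalled in the introduction. We also need semisimplicity of $J$, which is inherited from $\mathcal A$, so that Theorem \ref{SH1} applies.
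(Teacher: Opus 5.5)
Your proposal is correct and follows the paper's proof. You take the closed ideal $J$ generated by all closed ideals with UUNP, use Theorem \ref{3.4} to find $i_0$ with $\partial J_{i_0}\not\subset F$, apply Theorem \ref{SH1} in $J_{i_0}$ to get an element vanishing on $F$ with positive spectral radius, and conclude UUNP for $J$ by Theorem \ref{SH1}. Your version is cleaner at one step: you work with $F\cap\Phi_{J_{i_0}}$ and note that $\widehat a$ vanishes automatically on $h(J_{i_0})$, whereas the paper uses $\Psi_{i_0}(F)$ and calls it disjoint from $\partial J_{i_0}$, when it is only known not to contain it.
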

	\begin{proof}
		Let $(J_i)_{i \in I}$ be the collection of all closed ideals in $\mathcal{A}$ having the UUNP, and let  $J$ be the closed ideal in $\mathcal A$ generated by $\bigcup_i J_i$. By Theorem \ref{3.4}, we have $\overline{\bigcup_{i \in I} \partial J_{i}} = \partial J$. Let $F$ be a closed subset of $\Phi_J$ not containing $\partial J$. Then $F$ does not contain $\partial E_{i_0}$ for some $i_0 \in I$. Define $\Psi_{i_0}$ as in Lemma \ref{3.1}. Thus $\Psi_{i_0}(F)$ is a closed set disjoint from $\partial J_{i_0}$, and hence, by Theorem \ref{SH1}, there exists a nonzero $x \in J_{i_0}$ such that $\widehat{x}(\Psi_{i_0}(F)) = \widehat{x}(F) = \{0\}$ and $r_s(x) > 0$. Because $r_J(x) \geq r_s(x) > 0$, it follows that $J$ possesses UUNP.
	\end{proof}
	
	\begin{thm}
		Let $\mathcal{A}$ be a commutative Banach algebra. Then there always exists largest closed ideal having SEP.
	\end{thm}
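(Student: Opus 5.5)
We use Theorem \ref{SH2}: SEP is equivalent to UUNP together with the $(P)$-property. So we take $(J_i)_{i\in I}$ to be all closed ideals of $\mathcal A$ having SEP and let $J$ be the closed ideal generated by $\bigcup_i J_i$. Clearly $J$ contains every closed ideal with SEP. It therefore suffices to show that $J$ has UUNP and the $(P)$-property.

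\textbf{UUNP.} Each $J_i$ has UUNP, so Theorem \ref{3.4} applies to this subfamily: the proof of Theorem \ref{3.4} only uses that each $J_i$ has UUNP, not that the family contains all such ideals. This gives $\overline{\bigcup_i \partial J_i}=\partial J$. We then repeat the argument of Proposition \ref{3.4.1}. Let $F\subset\Phi_J$ be closed with $\partial J\not\subset F$. Then $F$ misses some $\partial J_{i_0}$, because otherwise $F$ would contain the closure of their union. Theorem \ref{SH1} applied in $J_{i_0}$ gives $x\in J_{i_0}$ with $r_s(x)>0$ and $\widehat x|_F=0$. Since $r_J(x)\ge r_s(x)>0$, the criterion (3) of Theorem \ref{SH1} shows that $J$ has UUNP. (Alternatively, we may simply note that $J$ is contained in the largest UUNP ideal; but we need UUNP of $J$ itself, so the direct argument above is the one we use.)

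\textbf{$(P)$-property.} Let $K$ be a nonzero closed ideal of $J$; we want a nonzero $x\in K$ with $r_p(x)>0$.

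The first step is to find some $i$ with $KJ_i\neq 0$. Since $\mathcal A$ is semisimple, pick a character $\varphi$ of $J$ and $k\in K$ with $\varphi(k)\ne0$. By the identification $\Phi_J=\bigcup_i\Phi_{J_i}$ (as in Theorem \ref{3.4}), $\varphi$ is nonzero on some $J_{i_0}$. Choose $j\in J_{i_0}$ with $\varphi(j)\neq 0$. Then $\varphi(kj)\ne0$, so $kj\neq0$.

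Next, $\overline{KJ_{i_0}}$ is a nonzero closed ideal of $J_{i_0}$, since $K$ is an ideal of $J$ and $J_{i_0}\subset J$. Because $J_{i_0}$ has SEP, it has the $(P)$-property, which yields a nonzero $x\in\overline{KJ_{i_0}}\subset K$ with $r_p^{J_{i_0}}(x)>0$.

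\textbf{Main obstacle.} The difficult step is comparing the permanent radius computed in $J_{i_0}$ with the one computed in $J$. We need $r_p^{J}(x)\ge r_p^{J_{i_0}}(x)$, and here a restriction argument suffices. Any submultiplicative norm $p$ on $J$ restricts to a submultiplicative norm on $J_{i_0}$. The completion of $J_{i_0}$ under this restriction embeds isometrically into the completion of $J$ as a closed subalgebra. Spectral radius is given by $\lim p(x^n)^{1/n}$, so it is intrinsic and does not depend on the ambient algebra. Hence $r_{J_p}(x)=r_{(J_{i_0})_{p}}(x)\ge r_p^{J_{i_0}}(x)$. Taking the infimum over $p$ gives $r_p^{J}(x)\ge r_p^{J_{i_0}}(x)>0$, as required.

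We should also check the convention in the definition of $(P)$: it asks for ideals of the algebra itself, and $K$ is an ideal of $J$, which is what we used. With UUNP and the $(P)$-property established, Theorem \ref{SH2} shows that $J$ has SEP. Hence $J$ is the largest closed ideal with SEP.
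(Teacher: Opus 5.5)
Your proof is correct and follows essentially the same route as the paper. Both arguments reduce SEP to UUNP plus the $(P)$-property via Theorem \ref{SH2}, obtain UUNP of $J$ from the argument of Theorem \ref{3.4} and Proposition \ref{3.4.1}, and obtain $(P)$ by finding a nonzero closed ideal of some $J_{i_0}$ inside the given ideal and comparing radii by restricting norms from $J$ to $J_{i_0}$. Your version is more careful at three points the paper glosses over: you rerun the UUNP argument for the subfamily of SEP ideals instead of citing Proposition \ref{3.4.1}, you justify $K\cap J_{i_0}\neq\{0\}$ via $KJ_{i_0}$ and a character, and you compare the permanent radius $r_p$ as defined rather than an infimum of norms.
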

	
	\begin{proof}
		Let $(J_i)_{i \in I}$ be the collection of all closed ideals in $\mathcal A$ having SEP and let  $J$ be the closed ideal in $\mathcal A$ generated by $\bigcup_i J_i$. Then $J_i$ has UUNP for each $i \in I$, by Theorem \ref{SH1}. Hence $J$ has UUNP, by Proposition \ref{3.4.1}. Now, by Theorem \ref{SH2}, it is sufficient to show that $J$ has a $(P)$-property, that is, for  a nonzero closed ideal $I$ of $J$ there exists $x \in I$ such that $r_1(x) = \inf\{|x| : |\cdot| \ \text{is a norm on} \ J\} > 0$. Let $I$ be a nonzero closed ideal in $J$. Then $I \cap J_{i_0}$ is nonzero for some $i_0$. As $I \cap J_{i_0}$ is a closed ideal in $J_{i_0}$, there exists $x_0 \in J_{i_0}$ such that $r_2(x_0) = \inf\{|x_0| : |\cdot| \ \text{is a norm on}\ J_{i_0}\} > 0 $. Here, $r_1(x_0) \geq r_2(x_0) > 0$. Therefore $J$ has SEP. This proves the result.
	\end{proof}

\end{document}